\documentclass{amsart}

\usepackage{amssymb}
\usepackage[hidelinks]{hyperref}

\newtheorem{theorem}{Theorem}[section]
\newtheorem{proposition}[theorem]{Proposition}
\newtheorem{lemma}[theorem]{Lemma}

\theoremstyle{definition}
\newtheorem{definition}[theorem]{Definition}
\newtheorem{example}[theorem]{Example}

\theoremstyle{remark}
\newtheorem{remark}[theorem]{Remark}

\begin{document}

\title[Projective Lifting Theorem and Quasi-Galois Points]{Projective Lifting Theorem, Complex Reflection Groups, and Quasi-Galois Points for Fermat Varieties}

\author[K.\ Miura]{Kei MIURA}
\address{Department of Applied Science, Yamaguchi University, 2-16-1 Tokiwadai, Ube, \linebreak Yamaguchi 755-8611, Japan}
\email{kmiura@yamaguchi-u.ac.jp}

\author[T.\ Takahashi]{Takeshi TAKAHASHI}
\address{Education Center for Engineering and Technology, Faculty of Engineering, Niigata University, 8050 Ikarashi-2-no-cho Nishi-ku, Niigata 950-2181, Japan}
\email{takeshi@eng.niigata-u.ac.jp}

\subjclass[2020]{Primary 14E05; Secondary 14N20, 20F55, 14N05.}
\keywords{birational automorphism, projective transformation, projection from a point, hypersurface, Fermat variety, Galois point, quasi-Galois point, complex reflection group}


\begin{abstract}
	We investigate birational automorphisms of hypersurfaces that preserve the projection from a point. Let $V \subset \mathbb{P}^{n+1}$ be an irreducible hypersurface of degree $d \ge 3$ with $\dim \mathrm{Sing}(V) \le n-2$, and let $\pi_P : V \dashrightarrow \mathbb{P}^n$ denote the projection from a point $P \in \mathbb{P}^{n+1} \setminus \mathrm{Sing}(V)$. Assume moreover that if $d = 3$ then $P \notin V$. We prove that any $\sigma \in \mathrm{Bir}(V)$ satisfying $\pi_P \circ \sigma = \pi_P$ extends uniquely to a projective transformation of $\mathbb{P}^{n+1}$.

	By combining the above projective lifting theorem with the theory of complex reflection groups, we investigate Galois and quasi-Galois points for the Fermat variety $F_d^n \subset \mathbb P^{n+1}$ of dimension $n$ and degree $d$. We show that, for $d \ge 4$, $F_d^n$ has exactly $n+2$ Galois points and, in addition, $d(n+2)(n+1)/2$ quasi-Galois points. When $d=3$, $F_3^n$ has exactly $n+2$ outer Galois points.
\end{abstract}

\maketitle

\section{Introduction and Main Results}

Let $k$ be the field of complex numbers ${\mathbb C}$. We fix it as the ground field of our discussion. Let $V \subset {\mathbb P}^{n + 1}$ be an irreducible hypersurface of degree $d$, and $k(V)$ the function field of $V$. Let ${\rm Bir}(V)$ be the group of birational automorphisms of $V$, that is, birational self-maps of $V$. Let $P$ be a point of ${\mathbb P}^{n + 1}$. Consider a projection $\pi_P : V \dashrightarrow {\mathbb P}^n$, which is the restriction of the projection ${\mathbb P}^{n + 1} \dashrightarrow {\mathbb P}^n$ with the center $P$. Then we obtain the field extension induced by $\pi_P$, i.e., $\pi_P^* : k({\mathbb P}^n) \hookrightarrow k(V)$. Putting $K_P = \pi_P^* (k({\mathbb P}^n))$, we recall the notion of a Galois point due to Yoshihara. 

\begin{definition}
	The point $P$ is called a \emph{Galois point} for $V$ if the field extension $k(V)/K_P$ is Galois. Furthermore, a Galois point is said to be \emph{inner} (resp. \emph{outer}) if $P \in V$ (resp. $P \in {\mathbb P}^{n + 1} \setminus V$). The group $G_P := {\rm Gal} (k(V)/K_P)$ is called the \emph{Galois group} at $P$.  
\end{definition}

To generalize the notion of Galois points, we introduce the notion of \emph{quasi-Galois points}. Quasi-Galois points were introduced by Fukasawa, Miura and Takahashi in \cite{fmt1} and \cite{fmt2}, where a systematic and detailed study of quasi-Galois points on nonsingular plane curves was carried out. 

First, we define the following set. 

\begin{definition}
	For a point $P \in {\mathbb P}^{n + 1}$, we define
	\[
		G[P] := \{ \tau \in {\rm Bir}(V) \, | \,  \pi_P \circ \tau = \pi_P \}. 
	\]
\end{definition}

\begin{remark}
	The set $G[P]$ is a subgroup of ${\rm Bir}(V)$, and its order divides the degree of $\pi_P$, since $G[P]$ acts faithfully on a general fiber of $\pi_P$.
\end{remark}

A quasi-Galois point is defined as follows. 

\begin{definition}
	The point $P$ is called a \emph{quasi-Galois point} if $|G[P]| \geq 2$. 
\end{definition}

\begin{remark}
	The condition $|G[P]| = \deg \pi_P$ holds if and only if $P$ is Galois. 
\end{remark}

The purpose of this article is to study properties of birational automorphisms of hypersurfaces compatible with projections, that is, elements of $G[P]$. As an application, we investigate Galois and quasi-Galois points for Fermat varieties from the viewpoint of complex reflection groups. The Fermat varieties are defined by 
\[
	F_d^n : x_0^d + x_1^d + \cdots + x_{n + 1}^d = 0 \subset {\mathbb P}^{n + 1}.
\]

To investigate quasi-Galois points, it is important to understand the elements of $G[P]$. If every element of $G[P]$ extends to a projective transformation of the ambient projective space, then one can apply methods from linear algebra and projective geometry instead of dealing directly with birational transformations. This considerably simplifies the study of quasi-Galois points.

Several results related to the projective extensibility of automorphisms and birational transformations associated with Galois points are known. For nonsingular quartic surfaces in $\mathbb P^3$, Yoshihara proved the corresponding result for inner Galois points \cite{YoshiharaQuartic}. His argument is based on the fact that a nonsingular quartic surface is a non-ruled minimal surface together with the complete linear system defining the embedding.  Moreover, the same argument works without essential modification for both inner and outer quasi-Galois points, and more generally for nonsingular surfaces in $\mathbb P^3$ of degree at least four. For nonsingular hypersurfaces, except for nonsingular quartic surfaces in $\mathbb P^3$, the classical theorem of Matsumura and Monsky \cite{MM} shows that every automorphism is induced by a projective transformation. Thus, once a birational transformation is known to be an automorphism, its projective extensibility follows from their theorem. Related results for nonsingular hypersurfaces with Galois points were obtained by Yoshihara in \cite{yoshi2}. However, as we explain in Remark~\ref{rem:yoshi2}, the argument given there does not appear to establish the regularity of all birational transformations belonging to Galois points. We also note that, for outer Galois points, related results were obtained by Yoshihara in \cite{Yoshihara2013}. In particular, the arguments in Representation~1, Remark~2.1, and Representation~2 of \cite{Yoshihara2013} show that the corresponding Galois group admits a projective representation. Moreover, the same argument applies without essential modification to outer quasi-Galois points. For normal hypersurfaces with Galois points, Fukasawa and Takahashi \cite{FukasawaTakahashi2014} proved that every element of the Galois group is induced by a projective transformation. Their argument, however, essentially uses the assumption that the point is Galois and does not directly apply to quasi-Galois points. These considerations naturally lead to the following question: given $\sigma\in G[P]$, when does $\sigma$ extend to a projective transformation of the ambient projective space? The following projective lifting theorem gives an affirmative answer for a large class of hypersurfaces, without assuming that $P$ is a Galois point.

\begin{theorem}\label{projtransf}
	Let $V \subset \mathbb{P}^{n+1}$ be an irreducible hypersurface of degree $d \ge 3$, and let $P \in \mathbb{P}^{n+1}$ be a point. Let $\pi_P : V \dashrightarrow \mathbb{P}^n$ be the projection from the center $P$. Assume that $\dim \mathrm{Sing}(V) \le n-2$ and $P \notin \mathrm{Sing}(V)$. Assume further that either
	\begin{enumerate}
    	\item[\normalfont (i)] $d \ge 4$, or
    	\item[\normalfont (ii)] $d = 3$ and $P \notin V$.
	\end{enumerate}
	Then, for any $\sigma \in \mathrm{Bir}(V)$ satisfying $\pi_P \circ \sigma = \pi_P$, there exists a unique projective transformation $T \in \mathrm{Aut}(\mathbb{P}^{n+1})$ such that $T(V) = V$ and $T|_V = \sigma$.
\end{theorem}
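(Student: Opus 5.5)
We work in coordinates with $P=(0:\cdots:0:1)$ and write $V$ as $F(x_0,\dots,x_n,x_{n+1})=0$. Viewing $F$ as a polynomial in $y:=x_{n+1}$ over $A:=k[x_0,\dots,x_n]$, set $e:=\deg\pi_P$. Then $e=d$ if $P\notin V$, and $e=d-1$ if $P\in V\setminus \mathrm{Sing}(V)$. In the latter case the coefficient of $y^{d}$ vanishes and the coefficient $a_1$ of $y^{d-1}$ is a nonzero linear form, because $P$ is a smooth point.

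The function field is $k(V)=K_P(\bar y)$, where $\bar y=x_{n+1}/x_0$ and $K_P=k(x_1/x_0,\dots,x_n/x_0)$. An element $\sigma\in G[P]$ is a $K_P$-automorphism, so it is determined by $\sigma^*\bar y=\eta$, another root of the minimal polynomial $f(Y)\in K_P[Y]$ of $\bar y$. Homogenizing, $\sigma$ is given on a dense open set by
\[
(x_0:\cdots:x_n:x_{n+1})\mapsto (x_0 B:\cdots:x_n B: C)
\]
with $B,C\in k[V]$ homogeneous. The goal is to show that we may take $B$ to be a nonzero constant and $C=\lambda x_{n+1}+L(x_0,\dots,x_n)$ with $L$ linear. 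This is exactly a projective transformation fixing $P$ and preserving lines through $P$, and it then automatically preserves $V$.

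\textbf{Step 1 (codimension-two extension).} Since $V$ is smooth in codimension one, $V$ is normal; it is a hypersurface, so it is $S_2$ and smooth in codimension one. The birational map $\sigma$ commutes with $\pi_P$ and moves points only along lines through $P$. On a general line $\ell$ through $P$, $\sigma$ permutes the points of $\ell\cap V$ away from $P$. I will use this to show that $\sigma$ and $\sigma^{-1}$ are both defined, and are finite over $\mathbb P^n$, outside a closed set of codimension at least $2$ in $V$.

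The key sub-claim is that the exceptional divisors of $\sigma$ must lie over a divisor $D\subset\mathbb P^n$, because $\sigma$ is an isomorphism over the locus where $\pi_P$ is étale. Over the generic point of each component of $D$, both the normalization of $V$ and $V$ itself are finite over the DVR $\mathcal O_{\mathbb P^n,D}$. By normality, $\sigma$ extends to an isomorphism there. The only possible exceptional divisor is the cone over $P$, which occurs in case (i) when $P\in V$, namely the tangent-cone component $\{a_1=0\}$. This has to be treated separately.

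\textbf{Step 2 (linearity).} From Step 1, $\sigma$ is an isomorphism in codimension one. Hence $\sigma^*$ acts on $\mathrm{Cl}(V)$ and sends the hyperplane class $H$ to some class $H'$. Since $\sigma$ preserves the pencil-type structure $\pi_P^*|\mathcal O(1)|\subset |H|$, we get $\sigma^*\pi_P^*H=\pi_P^*H$, so $H'=H$ as Weil divisor classes. Therefore $\sigma^*$ acts on $H^0(V,\mathcal O(1))=H^0(\mathbb P^{n+1},\mathcal O(1))$; this uses normality and $S_2$ to pass from the big open set to all of $V$. This action is linear, and it is the desired $T$.

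Uniqueness holds because $V$ spans $\mathbb P^{n+1}$ and is not contained in a hyperplane.

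\textbf{Main obstacle.} The hard part is Step 1 near the cone divisor through $P$ when $P\in V$, together with the equality $\sigma^*H=H$. Concretely, I must rule out $\sigma^*H = H + m E$, where $E$ is a component over which the fibre degenerates.

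To do this I will write $\eta=(\alpha \bar y+\beta)/(\gamma\bar y+\delta)$-type expressions using the explicit root relation. Comparing degrees via the Newton polygon of $f$ along each such divisor, the degree bounds should force $\eta=\lambda\bar y+\ell$. The Newton polygon has $d$ roots when $P\notin V$, and $d-1$ roots with one root at infinity when $P\in V$. The argument needs $e\ge 3$ so that three distinct roots pin down a Möbius-type relation. This is exactly why $d=3$ with $P\in V$ is excluded: there $e=2$, and the residual involution is generally not linear.
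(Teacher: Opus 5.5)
Your Steps~1--2 essentially settle the case $P\notin V$, and by a different route from the paper. There $\pi_P$ is a finite morphism and $V$ is normal, so $V$ is the normalization of $\mathbb{P}^n$ in $k(V)$ and $\sigma$ is biregular. Then $\sigma^*\mathcal{O}_V(1)=\sigma^*\pi_P^*\mathcal{O}_{\mathbb{P}^n}(1)\cong\mathcal{O}_V(1)$, and $H^0(\mathbb{P}^{n+1},\mathcal{O}(1))\cong H^0(V,\mathcal{O}_V(1))$ produces $T$ with no induction.

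The gap is the inner case $P\in V$, $d\ge 4$, which is where the content of (i) lies. You must exclude that $\sigma$, lifted to $\mathrm{Bl}_PV$, interchanges the exceptional divisor $E_P$ with a component of the strict transform of $V\cap T_PV$ lying over $M=\{a_1=0\}$. Equivalently, $\sigma$ must not contract a component of $V\cap T_PV$ onto $P$. This is exactly what the involution of Remark~\ref{rem:yoshi2} does for $d=3$. For $d\ge 4$, components of degree one over $M$ still occur: for instance, a quartic surface with $T_PV\cap V=L\cup C$, where $L$ is a line not through $P$ and $C$ is a cubic nodal at $P$. So no Newton-polygon or degree count along divisors can rule the swap out.

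Your ``Main obstacle'' paragraph leaves this open. Nothing forces $\eta$ to be a M\"obius expression in $\bar y$. Moreover, ``three roots pin down a M\"obius relation'' presupposes that $\sigma$ acts projectively on the lines through $P$, which is essentially the conclusion.

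There is a second gap. When $P\in V$, the lines through $P$ contained in $V$ can sweep out a divisor of $V$ (e.g.\ a line through $P$ on a quartic surface). Such a divisor lies over a codimension-$\ge 2$ subset of $\mathbb{P}^n$, where $\mathrm{Bl}_PV\to\mathbb{P}^n$ is not finite, so your DVR argument says nothing about it.

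The missing ingredient is the one place where $d\ge4$ genuinely enters. A smooth plane curve of degree $d\ge4$ has a unique $g^2_d$, so every birational automorphism of it is linear; smooth plane cubics have nonlinear automorphisms, which is why $P\in V$ is excluded for $d=3$. The paper makes this the base case of an induction on $n$:
\begin{enumerate}
\item[(a)] By Lemma~\ref{Lemma general hyperplane section}, $\sigma$ restricts to an element of $G[P]$ on $V\cap H$ for a general hyperplane $H\ni P$.
\item[(b)] By induction, $(\sigma|_{V\cap H})^*x_{n+1}$ is affine-linear.
\item[(c)] Lemmas~\ref{Lemma f=0}--\ref{Lemma f is linear} say that a rational function which vanishes, is constant, or is affine-linear on the general hyperplane $x_n=\alpha_0+\sum\alpha_ix_i$ is so globally. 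This upgrades (b) to $\sigma^*x_{n+1}\in k+kx_1+\cdots+kx_{n+1}$.
\end{enumerate}
This sidesteps all divisor bookkeeping on $\mathrm{Bl}_PV$, including the divisors swept by lines through $P$.

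To repair your own route you would need the same input. For example, on a general plane $\Pi\ni P$, the restriction $\sigma|_{V\cap\Pi}$ is linear and preserves the lines through $P$, hence fixes $P$, which gives $\sigma(E_P)=E_P$. The divisors swept by lines through $P$ would still require a separate argument.
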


The assumption that the ground field $k$ is the field of complex numbers is not essential for Theorem~\ref{projtransf}. The same statement holds over any algebraically closed field of characteristic zero.

\begin{remark}\label{rem:yoshi2} 
	In \cite{yoshi2}, Yoshihara stated that every element of the Galois group associated with a Galois point of a nonsingular hypersurface is a morphism. Although \cite{yoshi2} works under the standing assumption $d\ge 4$, the proof of this assertion does not appear to use the condition $d\ge 4$ in an essential way. Indeed, consider the nonsingular cubic surface 
	\[ 
		XW^2+Y^2W+X^3+Y^3+Z^3=0. 
	\] 
	The point $P=(0:0:0:1)$ is an inner Galois point. The corresponding Galois group is generated by the birational involution \[ (X:Y:Z:W)\longmapsto (X:Y:Z:-W-Y^2/X), \] which is not a morphism (see \cite{m}). If the above argument in \cite{yoshi2} were applicable, this involution would necessarily be a morphism. Thus, the argument does not appear to establish the claimed morphism property in this situation. This example also illustrates why the assumption $P\notin V$ is necessary in Theorem~\ref{projtransf}(ii). 
\end{remark}

Our second main result is a complete determination of the numbers of Galois and quasi-Galois points for Fermat varieties. Applying Theorem~\ref{projtransf}, we may regard elements of $G[P]$ as projective transformations. Combining this viewpoint with the theory of complex reflection groups, we obtain the following theorem.

\begin{theorem} \label{thm:FermatGalois}
 	For $F_d^n$ $(d \geq 4, n \geq 1)$, we have the following. 
 	\begin{enumerate}
  		\item[$(1)$] $F_d^n$ has $n + 2$ Galois points whose Galois groups are isomorphic to the cyclic group of order $d$.  
  		\item[$(2)$] $F_d^n$ has $d (n + 2)(n + 1)/2$ quasi-Galois points for which $G[P]$ is cyclic of order two.
 	\end{enumerate}
 	In particular, these are all the quasi-Galois points for $F_d^n$.
\end{theorem}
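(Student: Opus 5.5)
By Theorem~\ref{projtransf} (case (i), since $d\ge 4$ and $F_d^n$ is nonsingular), every $\sigma\in G[P]$ is the restriction of a unique $T\in\mathrm{Aut}(\mathbb P^{n+1})$ with $T(F_d^n)=F_d^n$. So $G[P]$ embeds in the projective automorphism group of $F_d^n$. This group is well known to be $(\mu_d)^{n+2}/\mu_d\rtimes S_{n+2}$, i.e.\ the image in $PGL_{n+2}$ of the imprimitive complex reflection group $G(d,1,n+2)$ of monomial matrices whose nonzero entries are $d$-th roots of unity. The plan is to use the condition $\pi_P\circ T=\pi_P$ to pin down the candidates.

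First I will show that $T$ fixes every line through $P$. Projectively, $T$ fixes $P$, and the induced map on $\mathbb P^n$ (lines through $P$) is the identity. Hence a linear lift $A$ of $T$ has an eigenvector $v_P$ spanning $P$, and $A$ acts as a scalar on $k^{n+2}/\langle v_P\rangle$. Thus $A$, after rescaling, is either a (pseudo-)reflection with a hyperplane of fixed vectors and eigenvector $v_P$, or a transvection-type map. Since $A$ has finite order (it lies in the finite group $G(d,1,n+2)$ modulo scalars, so some power is scalar), it is diagonalizable, and $A$ must be a genuine complex reflection of $G(d,1,n+2)$ up to scalar multiples. The projective kernel condition is then that $A$ is a scalar times an element with eigenvalue $1$ of multiplicity $n+1$.

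Next I will classify these. I will use the standard list of reflections in $G(d,1,n+2)$: the diagonal reflections $x_i\mapsto\zeta x_i$ with $\zeta\in\mu_d\setminus\{1\}$, and the order-two reflections $x_i\leftrightarrow \zeta x_j$ with $\zeta\in\mu_d$ and $i<j$. The main obstacle is to also handle elements that are only projectively reflections, i.e.\ $cA$ for a scalar $c$ with $A\in G(d,1,n+2)$. I will do this by a direct eigenvalue count on monomial matrices. A monomial matrix with an eigenvalue of multiplicity $n+1\ge 2$ forces the permutation part to be the identity or a single transposition, except in small cases. For $n\ge 1$ and $d\ge 4$ these small cases will be checked to give no new centers; in particular, for $n+2=3$ one must rule out a $3$-cycle times a scalar.

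Finally I will read off the centers, which are the isolated eigenlines:
\begin{itemize}
\item The diagonal reflections give the $n+2$ coordinate points $e_i$. Each has $G[P]=\mu_d$, which has order $d=\deg\pi_P$ since $e_i\notin F_d^n$, so these points are Galois. This proves $(1)$.
\item The transposition reflections $x_i\leftrightarrow\zeta x_j$ have center $e_i-\zeta e_j$. Here $\zeta$ ranges over $d$ values and the pair $\{i,j\}$ over $\binom{n+2}{2}$ choices, which gives $d(n+2)(n+1)/2$ distinct points. I will show that no other reflection shares one of these centers, so that $G[P]\cong\mathbb Z/2$. This gives $(2)$.
\end{itemize}
Since any quasi-Galois point must be the center of some nontrivial element of $G[P]$, it appears on one of the two lists above, which yields the final assertion.
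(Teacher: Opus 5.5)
Your proposal is correct and follows essentially the same route as the paper. Theorem~\ref{projtransf} makes $G[P]$ a group of projective transformations inside $G(d,1,n+2)/Z$, its nontrivial elements are projectively reflections with center $P$, and the Lehrer--Taylor list of reflections yields the $n+2$ coordinate points with $G[P]\cong\mathbb{Z}_d$ and the $d\binom{n+2}{2}$ points $e_i-\zeta e_j$ with $G[P]\cong\mathbb{Z}_2$. Your extra steps make explicit what the paper leaves implicit: the diagonalizability argument showing that an element of $G[P]$ is a genuine reflection centered at $P$, and the cycle count showing that a scalar multiple of a monomial matrix is a reflection only when its permutation part is trivial or a transposition (so no exceptional small cases, such as the $3$-cycle, actually arise).
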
 

\begin{remark} \label{rem:d=3_Fermat}
	When $d=3$, the projection from any point $P\in F_3^n$ induces a quadratic extension of function fields, and hence every such point is a (trivial) inner Galois point. However, in this case, the involution generating $G[P]$ does not necessarily extend to a projective transformation of the ambient projective space. See \cite{m} for details. On the other hand, $F_3^n$ has exactly $n+2$ outer Galois points. 
\end{remark}

For the Fermat curve $F_d^1$, the numbers of Galois points and quasi-Galois points were already determined in \cite{fmt1}, \cite{m-y1}, \cite{m-y2}, and \cite{yoshi1}.  
Moreover, the number of Galois points for the Fermat variety $F_d^n$ with $n \ge 2$, as stated in Theorem~\ref{thm:FermatGalois}(1) and Remark~\ref{rem:d=3_Fermat}, is also known. In fact, Yoshihara proved that $F_d^n$ has exactly $n+2$ outer Galois points (see \cite{yoshi2}). 
For the Fermat quartic surface, Miura and Taki \cite{MT} independently obtained, around the same time as the present work and by a different approach, the complete determination of quasi-Galois points, showing that they consist of $4$ Galois points and $24$ quasi-Galois points. Apart from the Fermat curve and the Fermat quartic surface, the number of quasi-Galois points for Fermat varieties has not been determined previously. 
The novelty of the present paper lies not only in the complete determination of quasi-Galois points for $F_d^n$, but also in the method. By combining projective-transformational properties of elements of $G[P]$ with the theory of complex reflection groups, we obtain a unified and conceptual approach to Galois points and quasi-Galois points. In particular, this approach allows us to interpret quasi-Galois points as reflections and thereby determine all quasi-Galois points for $F_d^n$.

\section{Proof of Theorem~\ref{projtransf}}

In order to prove Theorem~\ref{projtransf}, we first establish several lemmas concerning the behavior of rational functions under linear substitutions of the last variable.  These results will be used later to control the form of the automorphism $\sigma^*$ on the function field $k(V)$.

Throughout this section, we assume that $k$ is an algebraically closed field of characteristic zero. Let $k(x_1, \dots, x_n)$ denote the field of rational functions in $n$ variables over $k$.

\begin{lemma} \label{Lemma f=0}
	Let $n\ge2$. Let $f = f(x_1, \dots, x_n) \in k(x_1, \dots, x_n)$.  
 	If
  	\[
   		f\biggl( x_1, \dots, x_{n-1}, \alpha_0 + \sum_{i=1}^{n-1} \alpha_i x_i \biggr) = 0
  	\]
 	for a general point $(\alpha_0, \dots, \alpha_{n-1}) \in k^n$, then $f(x_1, \dots, x_n)=0$. 
\end{lemma}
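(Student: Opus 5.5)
We will reduce to a statement about a polynomial vanishing on a Zariski dense set. Write $f = g/h$ with $g, h \in k[x_1,\dots,x_n]$ coprime and $h \neq 0$. The substitution $x_n \mapsto \alpha_0 + \sum_{i=1}^{n-1}\alpha_i x_i$ is a ring homomorphism $k[x_1,\dots,x_n]\to k[x_1,\dots,x_{n-1}]$. For $f(x_1,\dots,x_{n-1},\alpha_0+\sum\alpha_i x_i)$ to make sense as an element of $k(x_1,\dots,x_{n-1})$, we need $h(x_1,\dots,x_{n-1},\alpha_0+\sum \alpha_i x_i)\neq 0$. This holds for general $\alpha$ by the same argument given below applied to $h\ne 0$. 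So the hypothesis says that $g(x_1,\dots,x_{n-1},\alpha_0+\sum\alpha_i x_i)=0$ in $k[x_1,\dots,x_{n-1}]$ for all $\alpha$ in some nonempty Zariski open subset $U\subset k^n$. It therefore suffices to prove: if a polynomial $g$ satisfies this for all $\alpha\in U$, then $g=0$.

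For the polynomial statement, we argue geometrically. The vanishing of $g(x_1,\dots,x_{n-1},\alpha_0+\sum\alpha_i x_i)$ as a polynomial means that $g$ vanishes at every point of $k^n$ of the form $(a_1,\dots,a_{n-1},\alpha_0+\sum\alpha_i a_i)$ with $a\in k^{n-1}$ and $\alpha\in U$, i.e.\ on the union of the non-vertical hyperplanes $x_n=\alpha_0+\sum\alpha_ix_i$ with $\alpha\in U$. This union is all of $k^n$. Indeed, given any point $a=(a_1,\dots,a_n)$, the set of $\alpha\in k^n$ with $a_n=\alpha_0+\sum\alpha_ia_i$ is an affine hyperplane $H_a$ in $k^n$. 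The open set $U$ meets $H_a$, since a nonempty open subset of the irreducible space $k^n$ cannot be contained in a proper closed subset. Hence $g$ vanishes at every point of $k^n$, and since $k$ is infinite, $g=0$. Thus $f=0$.

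Alternatively, and more algebraically, we can treat $\alpha_0,\dots,\alpha_{n-1}$ as indeterminates. The map $(x_1,\dots,x_{n-1},\alpha_0,\dots,\alpha_{n-1})\mapsto(x_1,\dots,x_{n-1},\alpha_0+\sum\alpha_ix_i)$ is dominant, even surjective, since $\alpha_0$ alone already makes the last coordinate free. So $g\circ\Phi=0$ as a polynomial in $2n-1$ variables forces $g=0$. Passing from ``general $\alpha$'' to the identity in the $\alpha$-variables uses only that a polynomial in $\alpha$ vanishing on a dense open set vanishes identically.

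The only point requiring care is interpreting ``for a general point'': the vanishing holds on a nonempty Zariski open set, and the substitution must be defined, i.e.\ the denominator must not vanish identically after substitution. Both are handled by the density argument above. Note that the argument does not really use $n\ge 2$ beyond the notation, since $\alpha_0$ alone suffices.
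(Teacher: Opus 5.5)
Your algebraic argument in the third paragraph is correct and already proves the lemma. The geometric argument in your second paragraph, however, contains a false step. The union of the hyperplanes $x_n=\alpha_0+\sum\alpha_i x_i$ with $\alpha\in U$ need not be all of $k^n$. Take $U=\{\alpha:\alpha_0\neq 0\}$, a legitimate open set on which the hypothesis might be all you know. None of these hyperplanes passes through the origin, so the union is $k^n\setminus\{0\}$.

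Your justification misuses irreducibility. A nonempty open $U$ meets every nonempty \emph{open} set, but $H_a$ is closed and can lie entirely inside the proper closed set $k^n\setminus U$.

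The repair is easy, because you only need density. The map $a\mapsto H_a$ is injective, and $k^n\setminus U$ contains at most finitely many hyperplanes, so the union misses only finitely many points of $k^n$. Equivalently, the union is the image of the dense set $k^{n-1}\times U$ under the surjective morphism $\Phi$, hence dense. A polynomial vanishing on a dense set is zero, and this is exactly what your third paragraph does.

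With that fix, your route differs from the paper's. The paper homogenizes $f=A/B$ and observes that vanishing on the general hyperplane $X_n=\sum_{i=0}^{n-1}\alpha_iX_i$ forces the linear form $X_n-\sum_{i=0}^{n-1}\alpha_iX_i$ to divide $A$. These forms are pairwise non-proportional for distinct $\alpha$, and a nonzero $A$ has only finitely many irreducible factors, so $A=0$.

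So the paper uses a unique-factorization counting argument, while you use Zariski density and dominance of the substitution map. Both are elementary. Yours has the small advantage of stating explicitly that the substituted denominator is nonzero for general $\alpha$, which the paper leaves implicit.
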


\begin{proof}
	Let $H_{\alpha} := X_n - \sum_{i=0}^{n-1} \alpha_i X_i$. Then $(H_{\alpha}=0) \subset \mathbb{P}^n$ is a general hyperplane. By assumption, $f|_{H_{\alpha}} = 0$ in $k(H_{\alpha})$. Let $A/B$ be the homogenization of $f$, where $A, B \in k[X_0, \dots, X_n]$ are homogeneous polynomials of the same degree. Since $f|_{H_{\alpha}} = 0$, the polynomial $H_{\alpha}$ divides $A$. If $A \neq 0$, then the number of irreducible factors of $A$ is finite. As $(\alpha_0, \dots, \alpha_{n-1})$ is general, this implies that $A = 0$, and hence $f=0$.  
\end{proof}

\begin{lemma} \label{Lemma f in k}
	Let $n\ge2$. Let $f = f(x_1, \dots, x_n) \in k(x_1, \dots, x_n)$. If
	\[
		f\biggl( x_1, \dots, x_{n-1}, \alpha_0 + \sum_{i=1}^{n-1} \alpha_i x_i \biggr) \in k(x_1, \dots, x_{n-1})
	\]
	is a constant function for a general point $(\alpha_0, \dots, \alpha_{n-1}) \in k^n$, then $f(x_1, \dots, x_n)$ is also constant.
\end{lemma}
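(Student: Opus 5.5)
The plan is to reduce Lemma~\ref{Lemma f in k} to Lemma~\ref{Lemma f=0} by subtracting the constant, after first controlling how that constant depends on $\alpha$. For general $\alpha=(\alpha_0,\dots,\alpha_{n-1})$, write
\[
c(\alpha) := f\Bigl(x_1,\dots,x_{n-1},\alpha_0+\sum_{i=1}^{n-1}\alpha_i x_i\Bigr)\in k .
\]
If $c(\alpha)$ were the same constant $c$ for general $\alpha$, then Lemma~\ref{Lemma f=0} applied to $f-c$ would give $f=c$ immediately. So the main obstacle is to show that $c(\alpha)$ does not depend on $\alpha$.

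To prove this, I will treat $\alpha$ as indeterminates and compare values at common points. Write $f=A/B$ with $A,B\in k[x_1,\dots,x_n]$ coprime and $B\neq 0$. For general $\alpha$, the restriction of $B$ to the hyperplane $L_\alpha:\ x_n=\alpha_0+\sum\alpha_i x_i$ is not identically zero. The hypothesis then says that $A-c(\alpha)B$ vanishes identically on $L_\alpha$.

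Now take two general parameters $\alpha,\beta$. Since $n\ge 2$, the affine hyperplanes $L_\alpha$ and $L_\beta$ in $k^n$ meet in an affine subspace of dimension $n-2\ge 0$, which is nonempty when the linear parts differ. I want a point $p\in L_\alpha\cap L_\beta$ with $B(p)\ne 0$. Then $f(p)=c(\alpha)$ and $f(p)=c(\beta)$, so $c(\alpha)=c(\beta)$.

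Finding such a point needs care when $n=2$, because the intersection is then a single point. I will avoid choosing a special pair of hyperplanes by varying $\beta$ instead. Fix a general $\alpha$ and a point $p\in L_\alpha$ with $B(p)\neq 0$; this is possible because $B|_{L_\alpha}\not\equiv 0$. The set of $\beta$ with $p\in L_\beta$ is an affine hyperplane in the parameter space $k^n$. It therefore has dimension $n-1\ge 1$, but it is not Zariski dense, so "general $\beta$" need not contain such a point.

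To handle this, I will run the argument through the set of good parameters. Let $U\subset k^n$ be a dense open set of $\alpha$ on which both $B|_{L_\alpha}\ne0$ and the constancy hypothesis hold. For any $\alpha,\beta\in U$, the intersection $L_\alpha\cap L_\beta$ is nonempty when the linear parts differ. Within $U$ I can choose $\beta$ with $L_\alpha\cap L_\beta\not\subset\{B=0\}$, for example by taking $\beta$ through a point $p$ as above and perturbing inside the open set. Connectivity then follows: any two elements of $U$ are joined by a chain $\alpha,\gamma,\beta$ of elements of $U$ whose consecutive hyperplanes meet at a point where $B\neq0$. Each step preserves the value of $c$, so $c$ is constant on $U$.

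A cleaner alternative, which I will try first, avoids chains entirely. Since $c(\alpha)=f(p)$ for every $p\in L_\alpha\setminus\{B=0\}$, the rational function $f$ is constant along every generic hyperplane. Every general point $p\in k^n$ lies on such hyperplanes, and generic hyperplanes through two general points $p,q$ exist, namely hyperplanes containing the line $pq$. Such a hyperplane can be chosen in $U$, because the hyperplanes containing a fixed line form a positive-dimensional family whose general member avoids the proper closed complement of $U$. This gives $f(p)=f(q)$ for general $p,q$, so $f$ is constant.

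This last step, showing that the parameter set $U$ meets the family of hyperplanes through a given line, is the main point to verify. It requires $n\ge2$ so that this family is nontrivial, and I would need to check that the family is not contained in the complement of $U$. Once $c$ is shown to be constant, the conclusion follows from Lemma~\ref{Lemma f=0} as described in the first paragraph.
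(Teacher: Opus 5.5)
Your reduction is correct as far as it goes. For $\alpha\in U$ and any $p\in L_\alpha$ with $B(p)\neq 0$ you do get $f(p)=c(\alpha)$. Once $c$ is known to be constant on a dense open set, Lemma~\ref{Lemma f=0} finishes. But the one step that carries the argument is left unproved: finding a good hyperplane through a common point where $B\neq0$. The reasons you give for it are also wrong.

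The non-vertical hyperplanes $L_\gamma$ containing two points $p,q$ (with $(p_1,\dots,p_{n-1})\neq(q_1,\dots,q_{n-1})$) form an affine subspace of the parameter space of dimension $n-2$. For $n=2$ this is a single point, namely the line $pq$ itself, so $n\ge 2$ does not make the family positive-dimensional. Positive dimension would not help anyway: for a \emph{fixed} line, nothing in the hypothesis prevents the whole family from lying in the closed set $k^n\setminus U$. Likewise, ``perturbing inside the open set'' in your chain argument is not a Zariski-topological argument, and the chain claim is only asserted. What you need is genericity of the pair, not a property of a fixed line or a fixed point $p$.

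Here is one clean repair, which also makes chains unnecessary. Fix $\alpha\in U$, and take $\beta$ with $(\beta_1,\dots,\beta_{n-1})\neq(\alpha_1,\dots,\alpha_{n-1})$. In the coordinates $x_1,\dots,x_{n-1}$ on $L_\alpha$, the set $L_\alpha\cap L_\beta$ is the affine hyperplane $h_\beta:=(\alpha_0-\beta_0)+\sum_{i=1}^{n-1}(\alpha_i-\beta_i)x_i=0$ (a point when $n=2$). Since $B|_{L_\alpha}\neq0$, the inclusion $\{h_\beta=0\}\subset\{B|_{L_\alpha}=0\}$ forces $h_\beta\mid B|_{L_\alpha}$. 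This happens only when $\beta-\alpha$ lies on one of finitely many lines through the origin. So for every $\beta\in U$ off finitely many lines through $\alpha$ (a proper closed subset, as $n\ge2$), there is $p\in L_\alpha\cap L_\beta$ with $B(p)\neq0$. Hence $c(\beta)=f(p)=c(\alpha)$.

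For your second version there is an equivalent fix. The incidence variety $\{(p,q,\gamma): p,q\in L_\gamma\}$ is irreducible and dominates $k^n\times k^n$, so over a general pair $(p,q)$ some $\gamma$ lies in $U$.

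With this repair your route is a valid alternative to the paper's. The paper instead differentiates the constant $\tilde f(\alpha)$ and uses $\partial\tilde f/\partial\alpha_i=x_i\,\partial\tilde f/\partial\alpha_0$. Since the left-hand side is independent of $x$, all partial derivatives must vanish. It then needs characteristic zero to conclude that $\tilde f$ is constant. Your evaluation-at-common-points argument avoids calculus and works in any characteristic.
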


\begin{proof}
	For each $\alpha := (\alpha_0, \dots, \alpha_{n-1})$, the constant value
	\[
		f\bigl( x_1, \dots, x_{n-1}, \alpha_0 + \sum_{i=1}^{n-1} \alpha_i x_i \bigr)
	\]
	depends only on $\alpha$.  Define a function 
	\[
		\tilde{f} : U \rightarrow k, \qquad \alpha \mapsto f\bigl( x_1, \dots, x_{n-1}, \alpha_0 + \sum_{i=1}^{n-1} \alpha_i x_i \bigr),
	\]
	where $U$ is a Zariski-open subset of $k^n$ on which $\tilde{f}$ is well-defined. Then $\tilde{f}$ is a regular function on $U$. Compute the partial derivatives:  
	\[
		\frac{\partial \tilde{f}}{\partial \alpha_0} = \frac{\partial f}{\partial x_n} \bigg|_{x_n = \alpha_0 + \sum_{i=1}^{n-1} \alpha_i x_i}, \qquad
		\frac{\partial \tilde{f}}{\partial \alpha_i} = x_i \frac{\partial f}{\partial x_n} \bigg|_{x_n = \alpha_0 + \sum_{i=1}^{n-1} \alpha_i x_i} \quad (1 \le i \le n-1).
	\]
	We have
	\[
		\frac{\partial\tilde f}{\partial\alpha_i} = x_i \frac{\partial\tilde f}{\partial\alpha_0} \qquad (1\le i\le n-1).
	\]
	Since $\tilde f$ is constant as a function of $x_1,\dots,x_{n-1}$, the partial derivatives $\partial\tilde f/\partial\alpha_0$ and $\partial\tilde f/\partial\alpha_i$ are constant in $x_1,\dots,x_{n-1}$. Therefore
	\[
		\frac{\partial\tilde f}{\partial\alpha_0}=0, \text{ and hence } \, \frac{\partial\tilde f}{\partial\alpha_i}=0 \quad (i=1,\dots,n-1).
	\]
	Thus $\tilde f$ is constant in $\alpha_0,\dots,\alpha_{n-1}$. Now apply Lemma~\ref{Lemma f=0} to the function \linebreak $f(x_1,\dots,x_n) - \tilde{f}$. We have that $f = \tilde{f}$. Hence $f$ is constant.
\end{proof}

\begin{lemma} \label{Lemma f is linear}
	Let $n\ge2$. Let $f = f(x_1, \dots, x_n) \in k(x_1, \dots, x_n)$. Suppose that for a general point $(\alpha_0, \dots, \alpha_{n-1}) \in k^n$, the rational function
	\[
		f\!\left( x_1, \dots, x_{n-1}, \alpha_0 + \sum_{i=1}^{n-1} \alpha_i x_i \right)
	\]
	is a linear combination of $1, x_1, \dots, x_{n-1}$ over $k$. Then $f(x_1,\dots,x_n)$ itself is a linear combination of $1, x_1, \dots, x_n$ over $k$.
\end{lemma}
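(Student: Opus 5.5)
We differentiate in the parameters $\alpha$, as in the proof of Lemma~\ref{Lemma f in k}, to reduce to the constant case. For a general $\alpha = (\alpha_0,\dots,\alpha_{n-1})$ write
\[
	f\Bigl(x_1,\dots,x_{n-1},\alpha_0+\sum_{i=1}^{n-1}\alpha_i x_i\Bigr) = c_0(\alpha) + \sum_{j=1}^{n-1} c_j(\alpha)\, x_j .
\]
First we check that the coefficients $c_j$ are regular (or at least rational) functions of $\alpha$ on a Zariski-open set $U\subset k^n$. To do this, evaluate the left side at $n$ general fixed points $(x_1,\dots,x_{n-1})$ and solve the resulting linear system. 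This gives $c_0(\alpha), \dots, c_{n-1}(\alpha)$ as rational expressions in $\alpha$. Put $g := \partial f/\partial x_n \in k(x_1,\dots,x_n)$, and denote its restriction by $g_\alpha := g|_{x_n=\alpha_0+\sum\alpha_i x_i}$.

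Next, differentiate the identity with respect to $\alpha_0$:
\[
	g_\alpha = \frac{\partial c_0}{\partial\alpha_0} + \sum_{j=1}^{n-1}\frac{\partial c_j}{\partial \alpha_0}\, x_j .
\]
So $g_\alpha$ is again affine-linear in $x_1,\dots,x_{n-1}$. Differentiating instead with respect to $\alpha_i$ for $1\le i\le n-1$ gives
\[
	x_i\, g_\alpha = \frac{\partial c_0}{\partial\alpha_i} + \sum_j \frac{\partial c_j}{\partial\alpha_i}\, x_j .
\]
The left side is then $x_i$ times an affine-linear function, and the right side is affine-linear. Comparing degrees in $x_i$ forces the linear part of $g_\alpha$ to vanish. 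Hence $g_\alpha$ is a constant depending only on $\alpha$. By Lemma~\ref{Lemma f in k}, $g=\partial f/\partial x_n$ is a constant $a_n\in k$.

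Set $h := f - a_n x_n$. Then $\partial h/\partial x_n = 0$, so $h\in k(x_1,\dots,x_{n-1})$. The justification is characteristic zero: write $h=A/B$ in lowest terms over $k(x_1,\dots,x_{n-1})[x_n]$; then $A'B-AB'=0$ forces $A$ and $B$ to be independent of $x_n$. Substituting $x_n=\alpha_0+\sum\alpha_i x_i$ leaves $h$ unchanged, so $h$ equals an affine-linear function minus $a_n(\alpha_0+\sum \alpha_i x_i)$. That is again a linear combination of $1,x_1,\dots,x_{n-1}$. Therefore $f = h + a_n x_n$ is a linear combination of $1,x_1,\dots,x_n$.

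The main obstacle is the first step: justifying that the coefficients $c_j(\alpha)$ depend rationally, and hence differentiably, on $\alpha$, so that the differentiation in $\alpha$ is legitimate. This is exactly the point that the proof of Lemma~\ref{Lemma f in k} handles only briefly. I plan to settle it with the interpolation argument above: choose the evaluation points generically so that the linear system is invertible, which keeps everything inside the field $k(\alpha)(x_1,\dots,x_{n-1})$.
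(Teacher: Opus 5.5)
Your proof is correct, but it reaches the conclusion by a somewhat different decomposition than the paper.

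The paper stays with the coefficient functions. From $\partial\tilde f/\partial\alpha_i = x_i\,\partial\tilde f/\partial\alpha_0$ it derives the system $\partial c_j/\partial\alpha_i=0$ for $i\neq j$ and $\partial c_i/\partial\alpha_i=\partial c_0/\partial\alpha_0=:e$. It then states that $e$ is constant, integrates to get $c_i=d_i+e\alpha_i$, and finishes by applying Lemma~\ref{Lemma f=0} to $f-(d_0+\sum_{i=1}^{n-1} d_ix_i+ex_n)$.

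You use the same two differentiated identities for less. They only show that $\partial f/\partial x_n$ restricts to a constant on a general hyperplane $x_n=\alpha_0+\sum\alpha_ix_i$. You then hand $\partial f/\partial x_n$ to Lemma~\ref{Lemma f in k}. Finally you integrate in $x_n$ rather than in $\alpha$, using the characteristic-zero fact that $\partial h/\partial x_n=0$ forces $h\in k(x_1,\dots,x_{n-1})$. That last step needs only one general $\alpha$.

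What your route buys:
\begin{itemize}
\item You avoid proving that $e$ is constant. In the paper this tacitly needs a mixed-partials argument, e.g.\ $\partial_{\alpha_j}e=\partial_{\alpha_0}\partial_{\alpha_j}c_0=0$ for $j\ge1$ and $\partial_{\alpha_0}e=\partial_{\alpha_i}\partial_{\alpha_0}c_i=0$.
\item You avoid integrating the system for the $c_i$.
\item Your interpolation step makes explicit the rationality of $c_j(\alpha)$, which the paper simply asserts. Equivalently, the identity lives in $k(\alpha)(x_1,\dots,x_{n-1})$, where $\partial/\partial\alpha_i$ is an honest derivation.
\end{itemize}
The price is that you depend on Lemma~\ref{Lemma f in k}, not only on Lemma~\ref{Lemma f=0}. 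Lemma~\ref{Lemma f in k}'s own proof asserts the analogous regularity in $\alpha$ without argument. Your interpolation, with a single general evaluation point, would fill that gap as well.
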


\begin{proof}
	Put
	\[
		\tilde{f} := f\!\left(x_1, \dots, x_{n-1}, \alpha_0 + \sum_{i=1}^{n-1} \alpha_i x_i \right) 
			= c_0 + c_1 x_1 + \dots + c_{n-1} x_{n-1},
	\]
	where $c_i \in k$.  
	For a general choice of $\alpha = (\alpha_0, \dots, \alpha_{n-1})$, the coefficients $c_i$ depend on $\alpha$. Each $c_i$ is a regular function on some Zariski-open subset $U \subset k^n$. Differentiating $\tilde{f}$ with respect to $\alpha_0$ yields
	\[
		\frac{\partial \tilde{f}}{\partial \alpha_0}
		= \frac{\partial f}{\partial x_n} \Bigg|_{x_n = \alpha_0 + \sum_{i=1}^{n-1} \alpha_i x_i}
		= \frac{\partial c_0}{\partial \alpha_0} + \sum_{j=1}^{n-1} \frac{\partial c_j}{\partial \alpha_0} \, x_j.
	\]
	Similarly, for $i = 1, \dots, n-1$,
	\[
		\frac{\partial \tilde{f}}{\partial \alpha_i}
		= x_i \, \frac{\partial f}{\partial x_n} \Bigg|_{x_n=\alpha_0 + \sum_{j=1}^{n-1} \alpha_j x_j}
		= \frac{\partial c_0}{\partial \alpha_i} + \sum_{j=1}^{n-1} \frac{\partial c_j}{\partial \alpha_i}\, x_j.
	\]
	We have that ${\partial \tilde{f}}/{\partial \alpha_i} = x_i \, {\partial \tilde{f}}/{\partial \alpha_0}$ $(i=1,\dots,n-1)$. Comparing coefficients of $1, x_1, \dots, x_{n-1}$, we obtain
	\[
		\frac{\partial c_j}{\partial \alpha_i} = 0 \quad (i \ne j), \qquad
		\frac{\partial c_i}{\partial \alpha_i} = \frac{\partial c_0}{\partial \alpha_0} \quad (i=1,\dots,n-1).
	\]
	We see that the function $e := {\partial c_i}/{\partial \alpha_i} = {\partial c_0}/{\partial \alpha_0}$ is constant on $U$. We may write $c_i = d_i + e \alpha_i$ $(i=0,\dots,n-1)$, for some constants $d_0,\dots,d_{n-1} \in k$. Then
	\[
		\tilde{f} = (d_0 + e\alpha_0) + \sum_{i=1}^{n-1} (d_i + e\alpha_i)\, x_i
				= \left( d_0 + \sum_{i=1}^{n-1} d_i x_i \right) + e\left( \alpha_0 + \sum_{i=1}^{n-1} \alpha_i x_i \right).
	\]
	Finally, applying Lemma~\ref{Lemma f=0} to $f(x_1, \dots, x_n) - \left(d_0 + \sum_{i=1}^{n-1} d_i x_i + e x_n\right)$, we conclude that $f = d_0 + \sum_{i=1}^{n-1} d_i x_i + e x_n$.
\end{proof}

Let $V \subset \mathbb{P}^{n+1}$ be an irreducible hypersurface of degree $d \ge 3$ with $n \ge 2$. Let $P \in \mathbb{P}^{n+1}$ be a point, and let $\pi_P : V \dashrightarrow \mathbb{P}^n$ denote the projection from $P$. Let ${\rm Sing}(V)$ be the singular locus of $V$, where we set $\dim {\rm Sing}(V) = -1$ when ${\rm Sing}(V)=\emptyset$. Assume that $\dim {\rm Sing}(V) \le n-2$ and $P \notin {\rm Sing}(V)$. Then we have the following:

\begin{lemma}[Lemma~2.5 and Proof of Theorem~1.3 in \cite{FukasawaTakahashi2014}] \label{Lemma general hyperplane section}
	For a general hyperplane $H$ passing through the point $P$, the hyperplane section $V \cap H \subset H \cong \mathbb{P}^n$ is an irreducible hypersurface of degree $d$ satisfying $\dim {\rm Sing}(V \cap H) \le n-3$ and $P \notin {\rm Sing}(V \cap H)$. Moreover, if $\sigma \in {\rm Bir}(V)$ satisfies $\pi_P = \pi_P \circ \sigma$, then $\sigma|_{V \cap H} \in {\rm Bir}(V \cap H)$ and $\pi_P|_{V \cap H} = \pi_P|_{V \cap H} \circ \sigma|_{V \cap H}$.
\end{lemma}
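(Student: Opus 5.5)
We need to prove the lemma about a general hyperplane section through $P$. The plan splits into a Bertini-type part (irreducibility and control of singularities) and a part about restricting $\sigma$.

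\textbf{Irreducibility and degree.} Hyperplanes through $P$ form a linear system $|\mathcal O(1)|$ with base locus $\{P\}$. Restricted to $V$, this system is the pullback of hyperplanes of $\mathbb P^n$ under $\pi_P$. I would first check that $\pi_P$ is generically finite onto $\mathbb P^n$: $V$ is not a cone with vertex $P$, since $P$ is a smooth point of $V$ or lies off $V$. Hence $\dim \pi_P(V)=n\ge2$. Bertini's irreducibility theorem for the composite with a linear system whose image has dimension at least $2$ then gives that $V\cap H$ is irreducible for general $H\ni P$. It is also reduced, of degree $d$, in $H\cong\mathbb P^n$.

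\textbf{Singular locus.} By Bertini's smoothness theorem (characteristic zero), the singular points of $V\cap H$ lie in $(\mathrm{Sing}(V)\cap H)\cup\{P\}$. For general $H$ through $P$ we have $P\notin\mathrm{Sing}(V)$. A general hyperplane through $P$ cuts each component of $\mathrm{Sing}(V)$ not containing $P$ in dimension one less, so $\dim(\mathrm{Sing}(V)\cap H)\le n-3$. At $P$ itself, if $P\in V$, then $V$ is smooth at $P$. A general $H\ni P$ is not the tangent hyperplane $T_PV$, so $V\cap H$ is smooth at $P$. If $P\notin V$, there is nothing to check. This gives $\dim\mathrm{Sing}(V\cap H)\le n-3$ and $P\notin\mathrm{Sing}(V\cap H)$.

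\textbf{Restriction of $\sigma$.} This is the main point. Since $\pi_P\circ\sigma=\pi_P$, the map $\sigma$ preserves the fibers of $\pi_P$. Now $V\cap H=\overline{\pi_P^{-1}(L)}$ for the hyperplane $L=\pi_P(H)\subset\mathbb P^n$. Let $U\subset V$ be a dense open set on which $\sigma$ and $\sigma^{-1}$ are regular and isomorphisms onto their images. The complement $V\setminus U$ has dimension at most $n-1$. For general $L$, the set $(V\setminus U)\cap H$ has dimension at most $n-2$, so it contains no component of the irreducible $(n-1)$-fold $V\cap H$. Thus $U\cap H$ is dense in $V\cap H$.

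On $U\cap H$, the map $\sigma$ lands in $\pi_P^{-1}(L)\subset V\cap H$, since it preserves $\pi_P$. Hence $\sigma$ restricts to a rational map $V\cap H\dashrightarrow V\cap H$, and likewise $\sigma^{-1}$ does. Their composites are the identity on a dense open set, so $\sigma|_{V\cap H}\in\mathrm{Bir}(V\cap H)$. The identity $\pi_P|_{V\cap H}\circ\sigma|_{V\cap H}=\pi_P|_{V\cap H}$ follows by restricting $\pi_P\circ\sigma=\pi_P$ to the dense open set $U\cap H$.

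\textbf{Main obstacle.} The delicate step is ensuring that genericity of $H$ is compatible with several finitely many open conditions simultaneously. These are the Bertini conditions, avoiding $T_PV$, and $V\cap H\not\subset V\setminus U$. Each is a nonempty open condition on the $\mathbb P^n$ of hyperplanes through $P$, so their intersection is nonempty.
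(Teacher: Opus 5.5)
Your proof is correct. The paper gives no argument of its own for this lemma, only the citation to Fukasawa--Takahashi, and your route is the standard argument that citation stands for: Bertini's irreducibility theorem applied to $\pi_P$ (whose image is $n$-dimensional with $n\ge 2$, since a cone with vertex $P$ of degree $d\ge 2$ would be singular at $P$), Bertini smoothness away from $\mathrm{Sing}(V)\cup\{P\}$ together with $H\neq T_PV$, and the observation that $\sigma$ maps $\pi_P^{-1}(L)$ into itself on a dense open set that remains dense in $V\cap H$. Two details are worth making explicit. First, choose $U$ with $P\notin U\cup\sigma(U)$, so that $\pi_P\circ\sigma=\pi_P$ holds as an equality of morphisms on $U$. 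Second, your choice of $H$ depends on $\sigma$; this is harmless because $\sigma$ is fixed before $H$ in the proof of Theorem~\ref{projtransf}, and the choice can be made uniform anyway since $G[P]$ is finite.
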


We now have all the necessary preparations to prove Theorem~\ref{projtransf}. We prove Theorem~\ref{projtransf}.

We first show the uniqueness. Let $T_1$ and $T_2$ be projective transformations of $\mathbb{P}^{n+1}$ such that $T_i(V) = V$ and $T_i|_V = \sigma$ for $i = 1,2$. Then the fixed locus of $T_1 \circ T_2^{-1}$ in $\mathbb{P}^{n+1}$ is a union of linear subspaces. Since $T_1|_V = T_2|_V$, this fixed locus contains $V$. As $V$ is an irreducible hypersurface and not a linear subspace, we conclude that $T_1 = T_2$.

We now prove the existence by induction on $n = \dim V$.

If $n = 1$, then $V \subset \mathbb{P}^2$ is a nonsingular plane curve of degree $d \ge 3$. If $d \ge 4$, the assertion is classical (see \cite{Chang1978} or Exercises~17–18 in Appendix~A of \cite{ACGH1985}). If $d = 3$ and $P \notin V$, the assertion follows immediately, since the embedding $V \hookrightarrow \mathbb{P}^2$ is given by the complete linear system associated with the divisor $D := V \cap l$, where $l$ is a line passing through $P$, and the condition $\pi_P \circ \sigma = \pi_P$ forces $\sigma^{*}D = D$ (see Remark~1 of \cite{Yoshihara2007}). 

Assume that $n \ge 2$. We may assume that $P = (0:\dots:0:1)$ and that
\[
	\pi_P : (X_0:\dots:X_{n+1}) \mapsto (X_0:\dots:X_n).
\]
Let $x_i$ be the rational functions on $V$ defined by $x_i = X_i/X_0$ for $i = 1,\dots,n+1$. Then the extension $k(V) / \pi_P^*(k(\mathbb{P}^n))$ induced by $\pi_P$ is written as
\[
	k(x_1,\dots,x_n)(x_{n+1}) \, / \, k(x_1,\dots,x_n).
\]
Since $\pi_P \circ \sigma = \pi_P$, the induced automorphism $\sigma^*$ of $k(V)$ satisfies $\sigma^*(x_i) = x_i$ for $i = 1,\dots,n$. Write
\[
	\sigma^* x_{n+1} = B_0 + B_1 x_{n+1} + \dots + B_r x_{n+1}^r \in k(x_1,\dots,x_n)(x_{n+1}),
\]
where each $B_i \in k(x_1,\dots,x_n)$ and $r$ is strictly less than the degree of the extension.

We show that $\sigma^* x_{n+1}$ is a linear combination of $1, x_1, \dots, x_{n+1}$ over $k$. Let $H$ be a general hyperplane passing through $P$. We may assume that $H$ is defined by $X_n = \sum_{i=0}^{n-1} \alpha_i X_i$. By Lemma~\ref{Lemma general hyperplane section} and the induction hypothesis, there exists a projective transformation $T' \in \operatorname{Aut}(H) \cong \operatorname{PGL}_{n+1}(k)$ such that $T'(V \cap H) = V \cap H$ and $T'|_{V \cap H} = \sigma|_{V \cap H}$. A system of homogeneous coordinates on $H$ is given by $(X_0:\dots:X_{n-1}:X_{n+1})$. Since $\pi_P|_{V \cap H} = \pi_P|_{V \cap H} \circ \sigma|_{V \cap H}$, the transformation $T'$ is represented by a matrix of the form
\[
	T' = 
		\begin{pmatrix}
			1 & & & \\
			& \ddots & & \\
			& & 1 & \\
			* & \cdots & * & *
		\end{pmatrix}.
\]
Hence $(\sigma|_{V \cap H})^* x_{n+1}$ is a linear combination of $1, x_1,\dots,x_{n-1}, x_{n+1}$ over $k$. Since $(\sigma^* x_{n+1})|_{V \cap H} = (\sigma|_{V \cap H})^*(x_{n+1}|_{V \cap H})$, substituting $x_n = \alpha_0 + \sum_{i=1}^{n-1} \alpha_i x_i$ into $\sigma^* x_{n+1} = B_0 + \sum_{i=1}^r B_i x_{n+1}^i$, we obtain
\[
	B_0|_{V \cap H} + \sum_{i=1}^r B_i|_{V \cap H} \, x_{n+1}^i = c_0 + c_1 x_1 + \dots + c_{n-1} x_{n-1} + c_{n+1} x_{n+1},
\]
where $c_j \in k$. Since the degree of the extension induced by \(\pi_P|_{V \cap H}\) is greater than \(r\), we obtain $B_2|_{V \cap H} = \dots = B_r|_{V \cap H} = 0$. By Lemma~\ref{Lemma f=0}, it follows that $B_2 = \dots = B_r = 0$. Moreover, $B_0|_{V \cap H}$ is a linear combination of $1, x_1,\dots,x_{n-1}$ over $k$, and $B_1|_{V \cap H}$ is constant. By Lemmas~\ref{Lemma f in k} and \ref{Lemma f is linear}, $B_0$ is a linear combination of $1, x_1,\dots,x_n$ over $k$, and $B_1$ is constant. Thus $\sigma^* x_{n+1}$ is a linear combination of $1, x_1,\dots,x_{n+1}$ over $k$. 

Therefore there exists a projective transformation $T \in \operatorname{Aut}(\mathbb{P}^{n+1})$ such that $T(V) = V$ and $T|_V = \sigma$. This completes the proof of Theorem~\ref{projtransf}.

\section{Complex Reflection Groups}

In this section, we recall the notion of complex reflection groups in brief (cf. \cite{dol}, \cite{l-t}, \cite{s-t}). Let $V$ be a vector space of dimension $n$ over the field of complex numbers ${\mathbb C}$, and $(-,-)$ be a positive definite hermitian form. Let $a \in V$ ($a \ne 0$) be a vector and $\alpha$ an $m$-th root of unity $(\alpha \ne 1)$. 

\begin{definition}
	For $v \in V$, 
	\[
		r_{a, \alpha} (v) := v - (1 - \alpha) \frac{(v, a)}{(a,a)} a.
	\]
	We call $r_{a, \alpha}$ a \emph{reflection}. 
\end{definition}

\begin{example}
	For example, when $n = 3$, let $a$ be a vector ${}^t\!(1,0,0)$ and $v$ be ${}^t\!(x,y,z)$, then $r_{a, \alpha}$ is given by
	\[
		r_{a, \alpha} (v) = v - (1 - \alpha) \frac{x}{1} a = 
			\begin{pmatrix}
				\alpha x \\
        			y \\
        			z 
			\end{pmatrix}
        =	\begin{pmatrix}
        		\alpha & 0 & 0 \\
   				0 & 1 & 0 \\
   				0 & 0 & 1
       	 	\end{pmatrix}
  		  	\begin{pmatrix}
  		  		x \\
  				y \\
  				z 
  		  	\end{pmatrix},
  	\]
	where $(-,-)$ is the standard hermitian inner product. Namely, $r_{a, \alpha}$ is given by the above matrix. 
\end{example}
 
Actually, we have another definition of reflections. 

\begin{definition}
	Let $M \in {\rm GL} (n, {\mathbb C})$. Then $M$ is called a \emph{reflection} if ${\rm rank} (M - E) = 1$ and $M^k = E$ for some integer $k \geq 2$, where $E$ is the identity matrix. 
\end{definition}

That is, the eigenvalues of a reflection are $\{ \alpha, 1,\ldots, 1 \}$, 
for some $\alpha \ne 1$. 

\begin{proposition}[{cf. \cite[Chapter 1, Section 2]{l-t}}]
	We have the following properties. 
	\begin{enumerate}
  		\item[$(1)$] $(r_{a, \alpha})^m = {\rm id}$. 
  		\item[$(2)$] For any $\lambda \in {\mathbb C} \setminus \{ 0 \}$, $r_{a, \alpha} = r_{\lambda a, \alpha}$. 
  		\item[$(3)$] For any $y \in \langle a \rangle^{\perp}$, $r_{a, \alpha} (y) = y$. 	
	\end{enumerate}
\end{proposition}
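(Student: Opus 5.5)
The plan is to verify the three properties directly from the defining formula
\[
	r_{a,\alpha}(v) = v - (1-\alpha)\frac{(v,a)}{(a,a)}\,a,
\]
taking $(-,-)$ to be linear in the first variable and conjugate-linear in the second, as the formula requires for $r_{a,\alpha}$ to be linear in $v$. The idea is to decompose $V = \langle a\rangle \oplus \langle a\rangle^{\perp}$ and compute the action of $r_{a,\alpha}$ on each summand. Everything else follows from that.

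First, property (3). For $y \in \langle a\rangle^{\perp}$ we have $(y,a)=0$, so the correction term vanishes and $r_{a,\alpha}(y)=y$.

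Next, compute $r_{a,\alpha}(a)$. Since $(a,a)/(a,a)=1$,
\[
	r_{a,\alpha}(a) = a-(1-\alpha)a = \alpha a.
\]
Because $(-,-)$ is positive definite, $(a,a)\neq 0$, so every $v$ decomposes as $v = \frac{(v,a)}{(a,a)}a + y$ with $y\in\langle a\rangle^{\perp}$. Hence, in a basis adapted to this decomposition, $r_{a,\alpha}$ is the diagonal map $\mathrm{diag}(\alpha,1,\dots,1)$, as in the Example. Then $r_{a,\alpha}^m$ acts by $\alpha^m=1$ on $a$ and trivially on $\langle a\rangle^{\perp}$, which gives (1). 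The same description shows that $\mathrm{rank}(r_{a,\alpha}-E)=1$, which is consistent with the second definition of a reflection.

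For (2), substitute $\lambda a$ into the formula:
\[
	\frac{(v,\lambda a)}{(\lambda a,\lambda a)}\,\lambda a = \frac{\bar\lambda\,(v,a)}{\lambda\bar\lambda\,(a,a)}\,\lambda a = \frac{(v,a)}{(a,a)}\,a.
\]
The scalars $\lambda$ and $\bar\lambda$ cancel, so $r_{\lambda a,\alpha}=r_{a,\alpha}$.

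There is no real obstacle in this argument. The only point requiring care is the sesquilinearity convention, which is what makes the factors cancel correctly in (2).
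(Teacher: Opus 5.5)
Your verification is correct: all three properties follow by direct computation from the defining formula, and you rightly note that taking the form linear in the first variable is what makes $r_{a,\alpha}$ linear and makes $\lambda$ and $\bar\lambda$ cancel in (2). The paper gives no proof of its own and simply cites Lehrer--Taylor, where these facts are likewise treated as immediate consequences of the definition, so your argument is essentially the standard one.
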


Our strategy is as follows. If there exists a quasi-Galois point $P$, then we obtain the group belonging to $P$, i.e., $G[P]$. Roughly speaking, $G[P]$ is generated by a reflection. Actually, we have already proved the following theorem in the case where $V$ is a nonsingular plane curve of degree $d \geq 4$. 

\begin{theorem}[{\cite[Theorem 2.3]{fmt1}}]
	The group $G[P]$ is a cyclic group. Furthermore, for an integer $n \geq 2$, $n$ divides $|G[P]|$ if and only if there exists a linear coordinate change $\phi$ such that 
 	\begin{enumerate}
  		\item[$(1)$] $\phi (P) = (1:0:0)$,
  		\item[$(2)$] there exists an element $\sigma \in G[\phi (P)] \subset \mathrm{Bir} (\phi (V))$ which is represented by the matrix 
  			$A_\sigma = 
  				\begin{pmatrix}
  					\zeta & 0 & 0 \\
   					0 & 1 & 0 \\
   					0 & 0 & 1 
  				\end{pmatrix}$, where $\zeta$ is a primitive $n$-th root of unity, 
  		\item[$(3)$] the defining equation of $\phi (V)$ is given by 
  			$\sum_i G_{d - n i} (x_1, x_2) x_0^{n i} = 0$, where $G_{d - n i}$ is a homogeneous polynomial of degree $d - n i$. 
 	\end{enumerate}
\end{theorem}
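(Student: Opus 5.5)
We work in the setting of \cite[Theorem 2.3]{fmt1}: $V\subset\mathbb{P}^2$ is a nonsingular plane curve of degree $d\ge 4$ and $P\in\mathbb{P}^2$ is arbitrary. The plan has three steps: realize $G[P]$ as a group of matrices, show this group is cyclic and generated by a homology with centre $P$, and then read off the divisibility criterion from the equation of $V$.

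\textbf{Step 1: linearization.} Since $V$ is nonsingular and $d\ge 4$, the case $n=1$ of Theorem~\ref{projtransf} applies. Each $\sigma\in G[P]$ is therefore the restriction of a unique $T_\sigma\in\mathrm{Aut}(\mathbb{P}^2)$ with $T_\sigma(V)=V$. Uniqueness gives an injective homomorphism $G[P]\to \mathrm{PGL}_3(k)$.

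Choose coordinates with $P=(1:0:0)$, so that $\pi_P(x_0:x_1:x_2)=(x_1:x_2)$. The condition $\pi_P\circ\sigma=\pi_P$ says that $T_\sigma$ fixes every line through $P$. Hence $T_\sigma$ fixes $P$, and its induced action on the pencil of lines through $P$ is trivial. A lift of $T_\sigma$ can then be normalized to
\[
A_\sigma=\begin{pmatrix} a & b & c\\ 0&1&0\\ 0&0&1\end{pmatrix},
\]
with $a\ne 0$. This normalization is unique, and $\sigma\mapsto A_\sigma$ is an injective group homomorphism into $\mathrm{GL}_3(k)$.

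\textbf{Step 2: cyclicity and diagonal form.} The map $\sigma\mapsto a$ is a homomorphism $G[P]\to k^\times$. I claim it is injective. If $a=1$, then $A_\sigma$ is unipotent. Since $G[P]$ is finite (its order divides $\deg\pi_P$), $A_\sigma$ has finite order, and a unipotent matrix of finite order in characteristic zero is the identity. So $b=c=0$ and $\sigma=\mathrm{id}$.

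Consequently $G[P]$ embeds in $k^\times$, and is therefore cyclic, say of order $m$ with generator $\sigma_0$. Its image $\zeta$ is a primitive $m$-th root of unity. When $a\ne 1$, the matrix $A_\sigma$ has distinct eigenvalues $a$ and $1$, and the eigenvalue $1$ has a $2$-dimensional eigenspace, namely the line $\ell_\sigma$. It is therefore diagonalizable, i.e.\ a reflection in the sense of Section~3, with fixed line
\[
\ell_\sigma=\{(a-1)x_0+bx_1+cx_2=0\}.
\]
Every nontrivial element of $G[P]$ is a power of $\sigma_0$, hence fixes the line $\ell_{\sigma_0}$ pointwise. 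So one coordinate change
\[
x_0\mapsto x_0+\tfrac{b}{a-1}x_1+\tfrac{c}{a-1}x_2,
\]
which keeps $P=(1:0:0)$ and leaves $x_1,x_2$ unchanged, simultaneously diagonalizes all of $G[P]$. After this change $\sigma_0$ is represented by $\mathrm{diag}(\zeta,1,1)$. This is the main obstacle: one must check both the injectivity of $\sigma\mapsto a$ (the finite-order argument above) and the existence of a common fixed line. Both come from the abelian, cyclic structure.

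\textbf{Step 3: the equivalence.} Suppose $n$ divides $m=|G[P]|$. Then $\sigma_0^{m/n}$ is represented by $\mathrm{diag}(\zeta',1,1)$ with $\zeta'$ a primitive $n$-th root of unity, which gives (1) and (2). Write the defining polynomial as
\[
F=\sum_j G_{d-j}(x_1,x_2)\,x_0^j .
\]
Invariance of $V$ under $\mathrm{diag}(\zeta',1,1)$ means $F(\zeta' x_0,x_1,x_2)=\lambda F$ for some constant $\lambda$, so every term that occurs satisfies $\zeta'^{\,j}=\lambda$.

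To obtain (3), I will show that $\lambda=1$. Nonsingularity of $V$ at, or near, $P$ shows that $F$ cannot be divisible by $x_0^2$ and forces enough terms to be present. If $P\notin V$, then $x_0^d$ occurs, so $\lambda=\zeta'^{\,d}$. If $P\in V$, then $x_0^{d-1}$ occurs. A cleaner route is to note that the curve $\{x_0=0\}$ being pointwise fixed forces $G_d\ne 0$. Indeed, if $G_d=0$ then $V$ contains the line $\{x_0=0\}$, contradicting irreducibility since $d\ge 4$. Hence the term with $j=0$ occurs, giving $\lambda=1$ and therefore $n\mid j$ for every occurring $j$, which is (3).

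Conversely, given (1)--(3), the matrix $\mathrm{diag}(\zeta,1,1)$ preserves $V$ and fixes every line through $P$. So it defines an element of $G[P]$ of order exactly $n$, and by Lagrange $n$ divides $|G[P]|$.
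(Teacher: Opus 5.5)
Your proof is correct and follows the argument the paper alludes to and imports from \cite{fmt1}. You lift each $\sigma\in G[P]$ to a projective transformation fixing every line through $P$, use finite order in characteristic zero to kill the unipotent case so that $\sigma\mapsto a$ embeds $G[P]$ in $k^\times$, conclude that $G[P]$ is cyclic and generated by a reflection $\mathrm{diag}(\zeta,1,1)$ after a change of the coordinate $x_0$, and read off the equation from $F(\zeta' x_0,x_1,x_2)=\lambda F$. The aborted digression on nonsingularity near $P$ in Step~3 is unnecessary, since irreducibility alone gives $G_d\neq 0$ and hence $\lambda=1$.
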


As in the argument of the above theorem, under the assumption of Theorem~\ref{projtransf}, namely when every element of $G[P]$ is realized as the restriction of a projective transformation, we see that $G[P]$ is generated by a reflection. Hence, we infer that quasi-Galois points can be regarded as a geometric interpretation of reflections. Conversely, by applying the theory of reflections, we investigate quasi-Galois points. In particular, by counting reflections, we obtain the number of quasi-Galois points.

\begin{definition}
	A \emph{complex} (or \emph{unitary}) \emph{reflection group} is a finite subgroup of the unitary group $U(V)$ that is generated by reflections.
\end{definition}

\begin{remark}
	Every finite subgroup of ${\rm GL} (V)$ that is generated by reflections is a unitary reflection group with respect to some positive definite hermitian form. 
\end{remark}

Now, we recall Shephard and Todd's classification \cite{s-t} (or \cite{dol}, \cite{l-t}). We use their notation. There are three infinite families of reflection groups and these are numbered $1$, $2$ and $3$. In addition there are $34$ primitive groups of rank greater than one numbered from $4$ to $37$. Hereafter we focus on $G(m, p, n)$. 

\begin{definition}
	Let $m$, $p$ and $n$ be integers such that $p|m$ and $\zeta_m$ be a primitive $m$-th root of unity. The groups $G(m, p, n)$ are subgroups of ${\rm GL}(n, {\mathbb C})$ consisting of the following matrices, 
	\begin{enumerate}
		\item[$\bullet$] the entries are either $0$ or a power of $\zeta_m$, 
  		\item[$\bullet$] there is exactly one non-zero entry in each row and column, 
  		\item[$\bullet$] the $(m/p)$-th power of the product of all non-zero entries is $1$. 
	\end{enumerate}
\end{definition}

Alternatively, we shall use the following wreath product description of $G(m,1,n)$.
\begin{definition}
	\[
		G(m,1,n) = {\mathbb Z}_m \wr S_n
	\]
\end{definition}

Namely, 
\[
	{\mathbb Z}_m \wr S_n = \left\{ ((a_1, a_2, \ldots, a_n),\sigma) \, | \, 0 \leq a_i \leq m-1, \, \sigma \in S_n \right\}.
\]
In particular, the order of $G(m,1,n)$ is $m^n n!$.

\section{Fermat Varieties}

In this section, we give some results on Galois and quasi-Galois points for Fermat varieties from the viewpoint of reflection groups. Throughout this section, we assume $d\ge4$ unless otherwise stated. Let $F_d^n \subset {\mathbb P}^{n + 1}$ be the Fermat variety $x_0^d + x_1^d + \cdots +  x_{n + 1}^d = 0$.  Taking a primitive $d$-th root of unity $\zeta = \zeta_d$, we have the following automorphisms of $F_d^n$: 
\[
	g_{i_0 i_1 \ldots i_n} : (x_0: x_1: \ldots: x_n: x_{n+1}) \mapsto (\zeta^{i_0} x_0: \zeta^{i_1}x_1: \ldots: \zeta^{i_n}x_n: x_{n+1}),
\]
where $0 \le i_0, i_1, \ldots, i_n < d$. They form an abelian group ${\mathbb Z}_d^{n + 1}$ of order $d^{n + 1}$. Also the symmetric group $S_{n + 2}$ acts on $F_d^n$ by permuting $x_0, x_1, \ldots, x_n, x_{n+1}$. Here, we recall the following well-known theorem (e.g.\ \cite{shioda}, \cite{tz}). 

\begin{theorem}
	Assume $(n, d) \ne (2, 4)$. Then ${\rm Aut} (F_d^n) \cong {\mathbb Z}_d^{n + 1} \rtimes S_{n + 2}$.  In particular, the order of ${\rm Aut} (F_d^n)$ is $(n + 2)! d^{n + 1}$. 
\end{theorem}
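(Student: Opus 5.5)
This theorem is quoted from \cite{shioda}, \cite{tz} and the paper gives no proof, so the following is my own outline. The approach has three parts: show every automorphism is linear, show the linear automorphisms are exactly the monomial ones, and count them.

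\emph{Linearity.} For $n \ge 1$, $d \ge 3$ and $(n,d) \ne (1,3), (2,4)$, the theorem of Matsumura and Monsky \cite{MM} states that every automorphism of the nonsingular hypersurface $F_d^n$ is induced by some $T \in \mathrm{PGL}_{n+2}$ with $T(F_d^n) = F_d^n$. For $n=1$, $d \ge 4$, the same conclusion is the classical curve result already used in the base case of the proof of Theorem~\ref{projtransf}. The case $n = 1$, $d = 3$ (an elliptic curve, whose automorphism group is infinite) lies outside the intended range. I would state this exclusion explicitly, in line with the standing assumption $d \ge 4$ of this section.

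\emph{Reduction to monomial matrices.} Fix a lift $M \in \mathrm{GL}_{n+2}$ of $T$. Then $f = \sum x_i^d$ satisfies $f \circ M = c f$ for some $c \ne 0$, and rescaling $M$ gives $f \circ M = f$. I would use the Hessian, which transforms covariantly: $\mathrm{Hess}(f \circ M) = (\det M)^2\, \mathrm{Hess}(f) \circ M$. Since $\mathrm{Hess}(f) = \bigl(d(d-1)\bigr)^{n+2} \prod_i x_i^{d-2}$, the map $M$ preserves the product of coordinates up to a scalar. As $d \ge 3$, unique factorization shows that $M$ permutes the coordinate hyperplanes $x_i = 0$, so $M$ is monomial: $M = D\cdot(\text{permutation})$ with $D = \mathrm{diag}(\lambda_0,\dots,\lambda_{n+1})$. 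The condition $f \circ M = f$ then forces $\lambda_i^d = 1$. Passing to $\mathrm{PGL}$, we may normalize $\lambda_{n+1} = 1$. This identifies the group with the automorphisms $g_{i_0 \ldots i_n}$ and the permutations described above.

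\emph{Structure and order.} The diagonal group $\mathbb Z_d^{n+2}$ modulo scalars is isomorphic to $\mathbb Z_d^{n+1}$. It is normal, since $S_{n+2}$ permutes the diagonal entries. It meets $S_{n+2}$ trivially in $\mathrm{PGL}$: a permutation matrix that is scalar times diagonal must be the identity. Hence $\mathrm{Aut}(F_d^n) \cong \mathbb Z_d^{n+1} \rtimes S_{n+2}$, of order $(n+2)!\,d^{n+1}$.

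\emph{Main obstacle.} The delicate step is the linearity step, which I would invoke from \cite{MM} rather than reprove. The excluded case $(2,4)$ is exactly where linearity fails: the Fermat quartic surface is K3 and has non-linear automorphisms, so the automorphism group is infinite.
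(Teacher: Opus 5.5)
Your argument is correct, modulo the Matsumura--Monsky theorem, which you cite rather than prove. There is no internal proof to compare it with: the paper quotes the statement as well known from \cite{shioda}, \cite{tz} and immediately recasts it as $\mathrm{Aut}(F_d^n)\cong G(d,1,n+2)/Z$. Your route lands on exactly that description. After normalizing $f\circ M=f$, the Hessian identity $\prod_i x_i^{d-2}=(\det M)^2\prod_i (Mx)_i^{d-2}$ identifies the stabilizer of $f$ in $\mathrm{GL}_{n+2}$ with the monomial matrices whose nonzero entries are $d$-th roots of unity, i.e.\ with $G(d,1,n+2)$. The kernel of its map to $\mathrm{PGL}_{n+2}$ is $\mu_d I=Z$.

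What your decomposition buys over the bare citation is a clean separation of inputs. Only the linearity step uses $(n,d)\ne(1,3),(2,4)$; the Hessian and counting steps use nothing beyond linearity of $T$ and $d\ge 3$. So those steps alone prove $\mathrm{Lin}(F_d^n)\cong\mathbb Z_d^{n+1}\rtimes S_{n+2}$ for all $n\ge 1$, $d\ge 3$. This is the form the paper actually relies on when it passes to $\mathrm{Lin}(F_4^2)\cong\mathbb Z_4^3\rtimes S_4$ in the subsequent remark. It is also what the $d=3$ count of Remark~\ref{rem:d=3_Fermat} needs: there, linearity of $G[P]$ for outer points comes from Theorem~\ref{projtransf}(ii), not from \cite{MM}, and $\mathrm{Aut}(F_3^1)$ is infinite. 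Your explicit exclusion of $(1,3)$ is correct; in the paper it is covered by the section's standing assumption $d\ge 4$.
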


\begin{remark}
	We note that ${\rm Aut} (F_d^n)$ acts on $F_d^n$ as projective transformations. In case $(n, d) = (2, 4)$, $F_4^2$ is a $K3$ surface. It is known that there exists an automorphism of quartic surfaces which is not a restriction of a projective transformation. However, by Theorem~\ref{projtransf},  we can assume that any element of $G[P]$ is the restriction of a projective transformation. That is, it is enough to consider only ${\rm Lin (F_4^2) } \cong {\mathbb Z}_4^{3} \rtimes S_4$. 
\end{remark}

By \cite{l-t}, the polynomial $x_0^d + x_1^d + \cdots + x_{n+1}^d$ is invariant under the action of $G(d,1,n+2)$. Moreover, 
\[
	G(d, 1, n + 2) = \langle t, r_1, r_2, \ldots, r_{n + 1} \rangle,
\]
where $t = r_{{e_1}, \zeta_d}$, $r_i = r_{e_i - e_{i + 1}, -1}$ $(i = 1, 2, \ldots, n + 1)$, and  $e_1, e_2, \ldots, e_{n + 2}$ form an orthonormal basis. 

\begin{example}
	For example, we explicitly describe the generators of $G(d,1,4)$ using the standard hermitian inner product on ${\mathbb C}^4$. For a vector $v = {}^t\!(x,y,z,w) \in {\mathbb C}^4$, 
	\begin{align*}
		t (v) = r_{{e_1}, \zeta_d} (v)  & = v - (1 - \zeta_d) \frac{(e_1, v)}{(e_1, e_1)} e_1 \\
			& = 
				\begin{pmatrix}
					x \\ y \\ z \\ w
				\end{pmatrix}
				- (1 - \zeta_d) \frac{x}{1} 
				\begin{pmatrix}
					1 \\ 0 \\ 0 \\ 0
				\end{pmatrix}  
				= 
				\begin{pmatrix}
					\zeta_d x \\ y \\ z \\ w
				\end{pmatrix}. 
	\end{align*}
	Namely $t$ is represented by the matrix $\begin{pmatrix} \zeta_d & 0 & 0 & 0 \\ 0 & 1 & 0 & 0 \\ 0 & 0 & 1 & 0 \\ 0 & 0 & 0 & 1 \end{pmatrix}.$ On the other hand, 
	\begin{align*}
		r_1 (v) & = r_{e_1 - e_{2}, -1} (v) = v - (1 - (-1)) \frac{(e_1-e_2, v)}{(e_1-e_2, e_1-e_2)} (e_1 -e_2) \\
		& = \begin{pmatrix} x \\ y \\ z \\ w \end{pmatrix} - 2 \frac{x - y}{2} \begin{pmatrix} 1 \\ -1 \\ 0 \\ 0 \end{pmatrix} = \begin{pmatrix} y \\ x \\ z \\ w \end{pmatrix}. 
	\end{align*}
	Namely $r_1$ is represented by the matrix 
	$\begin{pmatrix} 
		0 & 1 & 0 & 0 \\ 
		1 & 0 & 0 & 0 \\ 
		0 & 0 & 1 & 0 \\ 
		0 & 0 & 0 & 1
	\end{pmatrix}.$ 
	By a similar argument, $r_2$ is represented by 
	$\begin{pmatrix}
		1 & 0 & 0 & 0 \\ 
		0 & 0 & 1 & 0 \\ 
		0 & 1 & 0 & 0 \\ 
		0 & 0 & 0 & 1
	\end{pmatrix}$, $r_3$ is represented 
	by 
	$\begin{pmatrix} 
		1 & 0 & 0 & 0 \\ 
		0 & 1 & 0 & 0 \\ 
		0 & 0 & 0 & 1 \\ 
		0 & 0 & 1 & 0
	\end{pmatrix}$. That is, $G(d,1,4)$ is generated by these four matrices. Clearly, $x_0^d + x_1^d + x_2^d + x_3^d$ is an invariant polynomial of $G(d,1,4)$. 
\end{example}

\begin{lemma}
	Let $Z$ be the center of $G(d, 1, n + 2)$. Then
	\[
		Z= \left\langle 
		\begin{pmatrix} 
  			\zeta_d & 0 & \dots  & 0 \\
  			0 & \zeta_d & \dots  & 0 \\
  			\vdots & \vdots & \ddots & \vdots \\
  			0 & 0 & \dots  & \zeta_d
		\end{pmatrix}
	  	\right\rangle \cong {\mathbb Z}_d. 
	\]
\end{lemma}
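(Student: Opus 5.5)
We need to show that the center $Z$ of $G(d,1,n+2)$ is exactly the group of scalar matrices $\zeta_d^k E$, $0 \le k < d$. We use the description of $G(d,1,n+2)$ as the monomial matrices of size $N:=n+2$ whose nonzero entries are $d$-th roots of unity, that is, $D \rtimes S_N$. Here $D \cong \mathbb{Z}_d^{N}$ is the diagonal subgroup and $S_N$ acts by permutation matrices.

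First, the inclusion $\supseteq$. A scalar matrix $\zeta_d^k E$ commutes with every matrix, and it lies in $G(d,1,N)$: it is monomial with entries powers of $\zeta_d$, and for $p=1$ there is no further condition. So $\langle \zeta_d E\rangle \subseteq Z$. Since $\zeta_d$ is a primitive $d$-th root of unity, this subgroup is cyclic of order $d$.

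For the inclusion $\subseteq$, let $M = DP_\tau \in Z$. Here $D=\mathrm{diag}(\lambda_1,\dots,\lambda_N)$ with $\lambda_i^d=1$, and $P_\tau$ is the permutation matrix of $\tau\in S_N$, with $P_\tau e_j = e_{\tau(j)}$.

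We first show that $\tau$ is the identity. Suppose $\tau(j)=i\neq j$. Since $d\ge 2$, the element $t_j := r_{e_j,\zeta_d}$ of $G(d,1,N)$ multiplies the $j$-th coordinate by $\zeta_d$ and fixes the others. The conjugate $M t_j M^{-1}$ is then the diagonal matrix that multiplies the $\tau(j)$-th coordinate by $\zeta_d$. Because $\zeta_d\neq 1$, this differs from $t_j$, so $M$ does not commute with $t_j$, a contradiction. Hence $\tau=\mathrm{id}$ and $M=D$ is diagonal.

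Next we show that all diagonal entries of $M$ are equal. Commuting $D$ with the transposition matrix $r_i = r_{e_i-e_{i+1},-1}$ exchanges $\lambda_i$ and $\lambda_{i+1}$. The equation $r_iDr_i^{-1}=D$ therefore forces $\lambda_i=\lambda_{i+1}$ for $i=1,\dots,N-1$. So $D=\lambda E$ with $\lambda^d=1$, that is, $\lambda=\zeta_d^k$ for some $k$.

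Combining the two inclusions gives $Z=\langle \zeta_d E\rangle\cong\mathbb{Z}_d$. No step is a real obstacle. The only point needing care is showing that a central element has trivial permutation part, which is why we use the reflections $t_j$ and need $\zeta_d\neq 1$, i.e. $d\ge2$; this holds under the standing assumption $d \ge 4$.
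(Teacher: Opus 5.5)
Your proof is correct. Conjugating the diagonal reflections $t_j=r_{e_j,\zeta_d}$ by a central element forces its permutation part to be trivial, which is where $\zeta_d\neq 1$ is needed. Commuting with the transpositions $r_i$ then forces all diagonal entries to be equal, and the scalars $\zeta_d^kE$ lie in $G(d,1,n+2)$ because the product condition is vacuous for $p=1$. The paper states this lemma without proof as a standard fact, so your argument supplies the routine verification it leaves implicit.
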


Therefore, we have the natural projection
\[
	{\rm GL}(n + 2, {\mathbb C}) \supset G(d, 1, n + 2) \rightarrow G(d, 1, n + 2) / Z \subset {\rm PGL} (n + 2, {\mathbb C}) \cong {\rm Aut} ({\mathbb P}^{n + 1}).
\]
In view of the above theorem, we obtain
\[
	G(d, 1, n + 2) / Z \cong {\rm Aut} (F_d^n). 
\]
Hence, by counting reflections in $G(d,1,n+2)$, we can determine all Galois and quasi-Galois points for Fermat varieties.

By \cite{l-t}, we have the following proposition. 
\begin{proposition}[{\cite[Lemma 2.8]{l-t}}]\label{l-t}
	The element $r \in G(d, 1, n + 2)$ is a reflection if and only if $r$ has one of the following forms:
 	\begin{enumerate}
  		\item[{\rm (i)}] For some $i$ and for some $d$-th root of unity $\zeta \ne 1$, $r = r_{e_i, \zeta}$. 
  		\item[{\rm (ii)}] For some $i \ne j$ and some $d$-th root of unity $\zeta$, $r = r_{e_i - \zeta e_j, -1}$. 
 	\end{enumerate}
\end{proposition}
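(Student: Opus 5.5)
The plan is to classify all elements $r \in G = G(d,1,n+2)$ with $\operatorname{rank}(r - E) = 1$. Every element has the form $r = D\,P_\sigma$, where $D = \mathrm{diag}(\zeta^{a_1},\dots,\zeta^{a_{n+2}})$ and $P_\sigma$ is the permutation matrix of $\sigma \in S_{n+2}$, so that $r e_j = \zeta^{a_{\sigma(j)}} e_{\sigma(j)}$ (up to the indexing convention). Finite order is automatic, so only the rank condition needs to be analysed. We first compute $r - E$ orbit by orbit, using the cycle decomposition of $\sigma$: the space $\mathbb{C}^{n+2}$ splits as a direct sum of the coordinate subspaces spanned by $\{e_j\}$ for $j$ running over each cycle. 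Both $r$ and $E$ preserve this decomposition, so $\operatorname{rank}(r - E)$ is the sum of the ranks of the blocks.

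Next we compute the rank of a single block. On a cycle of length $\ell$, the block of $r$ is a monomial cyclic matrix $C$ whose nonzero entries have product $c$, a $d$-th root of unity. Its characteristic polynomial is $t^\ell - c$. Hence $C - E$ has a kernel of dimension at most one, and that kernel is nontrivial exactly when $c = 1$. The rank of $C - E$ is therefore $\ell - 1$ if $c = 1$ and $\ell$ otherwise.

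The total rank must equal $1$, which forces all blocks but one to have rank $0$. A rank-$0$ block is a fixed point of $\sigma$ with entry $1$. The exceptional block is one of two kinds:
\begin{enumerate}
\item[(a)] a fixed point $i$ whose entry is $\zeta \ne 1$, which gives $r = \mathrm{diag}(1,\dots,\zeta,\dots,1) = r_{e_i,\zeta}$;
\item[(b)] a transposition $(i\,j)$ with product of entries $c = 1$, which gives the block $\begin{pmatrix} 0 & \zeta^{-1}\\ \zeta & 0\end{pmatrix}$ on $\langle e_i, e_j\rangle$.
\end{enumerate}
Cycles of length $\ell \ge 3$ are excluded, since their blocks have rank at least $\ell - 1 \ge 2$.

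It remains to identify the block in (b) with $r_{e_i - \zeta e_j,-1}$. This is a direct computation with the defining formula for the reflection:
\[
r_{a,-1}(v) = v - 2\frac{(v,a)}{(a,a)}\,a, \qquad a = e_i - \zeta e_j,\ (a,a) = 2.
\]
Applying it to the basis vectors reproduces the block above, after replacing $\zeta$ by $\bar\zeta = \zeta^{-1}$ if the conjugation convention of the form requires it. This harmless reparametrisation still yields a $d$-th root of unity. For the converse direction, we check that $r_{e_i,\zeta}$ and $r_{e_i-\zeta e_j,-1}$ are reflections lying in $G$. Both are monomial matrices with $d$-th root of unity entries, and each has rank one for its difference with $E$, so this check is immediate.

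The main obstacle is the rank bookkeeping in the cycle-block step, specifically the claim that a cyclic monomial block of length $\ell$ has kernel dimension at most one for $C - E$. We argue this from the observation that $C$ acts as a cyclic shift up to scalars, so an eigenvector for eigenvalue $1$ is determined by a single coordinate. Everything else is routine matrix computation.
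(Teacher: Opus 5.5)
Your proof is correct and complete. The block decomposition along the cycles of $\sigma$ and the characteristic polynomial $t^\ell-c$ of a cyclic monomial block give $\operatorname{rank}(C-E)\in\{\ell-1,\ell\}$. This forces $\sigma$ to be trivial or a single transposition, and the two surviving cases are exactly (i) and (ii).

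The paper gives no proof of its own here. It imports the statement from Lehrer--Taylor \cite[Lemma 2.8]{l-t}, which covers the general family $G(m,p,n)$. There the diagonal reflections $r_{e_i,\zeta}$ occur only when $\zeta^{m/p}=1$, a restriction that disappears for $p=1$. Your argument is the direct, self-contained version specialised to $p=1$. The citation buys generality; your route buys independence from the reference.

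Your explicit block description also makes the count used right after the proposition transparent. Since $r_{e_j-\zeta e_i,-1}=r_{e_i-\zeta^{-1}e_j,-1}$, a type (ii) reflection is determined by an unordered pair $\{i,j\}$ together with the entry $\zeta$ of your block. This gives $d\binom{n+2}{2}$ of them, while type (i) gives $(n+2)(d-1)$.

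One small remark: your hedge about replacing $\zeta$ by $\bar\zeta$ is unnecessary. With the form linear in the first argument, which is the only convention making $r_{a,\alpha}$ linear, one computes $r_{e_i-\zeta e_j,-1}(e_i)=\zeta e_j$ and $r_{e_i-\zeta e_j,-1}(e_j)=\zeta^{-1}e_i$. That is exactly your block.
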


In particular, we see that for type ${\rm (i)}$, the order of $r$ is $d$ and the number of reflections is $(n + 2) (d - 1)$. On the other hand, for type ${\rm (ii)}$, the order of $r$ is 2 and the number of reflections is  
\[
	d \binom{n+2}{2}= \frac{d(n+2)(n+1)}{2}.
\]

Here we should remark that for type ${\rm (i)}$, $d - 1$ reflections 
\[
	\begin{pmatrix} 
		\zeta^i & 0 & \dots  & 0 \\
		0 & 1 & \dots  & 0 \\
  		\vdots & \vdots & \ddots & \vdots \\
  		0 & 0 & \dots  & 1
	\end{pmatrix} \, (i = 1,2,\ldots, d-1)
\]
belong to the same Galois point $(1 : 0 : \cdots : 0)$. Hence, although there exist $(n + 2) (d - 1)$ reflections, we obtain exactly $(n + 2) (d - 1)/ (d - 1) = n + 2$ Galois points, each of whose Galois groups is isomorphic to the cyclic group of order $d$. On the other hand, type ${\rm (ii)}$ reflections correspond to quasi-Galois points satisfying $|G[P]| = 2$. Thus we complete the proofs of Theorem~\ref{thm:FermatGalois} and Remark~\ref{rem:d=3_Fermat}. 

Applying Proposition~\ref{l-t}, we obtain the following result, which recovers our previous results.

\begin{theorem}[\cite{fmt1}, \cite{m-y1}, \cite{m-y2}, \cite{yoshi1}] 
	For the Fermat curve $F_d^1: x_0^d + x_1^d + x_2^d = 0$, 
 	\begin{enumerate}
  		\item[$(1)$] $F_d^1$ ($d \geq 3$) has three Galois points. The Galois group $G_P$ is isomorphic to ${\mathbb Z}_d$ when $P$ is a Galois point.  
  		\item[$(2)$] $F_d^1$ ($d\geq 4$) has $3 d$ quasi-Galois points of order two (i.e., $G[P] \cong {\mathbb Z}_2$). 
 	\end{enumerate}
\end{theorem}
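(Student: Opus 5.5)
The plan is to specialize the reflection-counting argument just given for $F_d^n$ to the case $n=1$. We then supplement it with the curve version of the lifting theorem, so that the case $d=3$ in part (1) is also covered.

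First we show that every element of $G[P]$ is linear. For $d\ge 4$ this is the $n=1$ base case of Theorem~\ref{projtransf}: $F_d^1$ is a nonsingular plane curve, and every birational automorphism is an automorphism induced by a projective transformation. For $d=3$ and an outer point $P$, this is case (ii) of Theorem~\ref{projtransf}. Consequently $G[P]\subset \mathrm{Aut}(F_d^1)\cong G(d,1,3)/Z$. Since $(n,d)=(1,d)\ne(2,4)$, the automorphism group theorem applies. For $d=3$ we use that $\mathrm{Aut}(F_3^1)$ restricted to linear maps is $\mathbb Z_3^2\rtimes S_3$.

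Next, take a nontrivial $\tau\in G[P]$ and a lift $M\in G(d,1,3)$. Since $\tau$ fixes every line through $P$, $M$ preserves every plane through the vector $\hat P$. Hence $M$ acts as a scalar $\lambda$ on $\mathbb C^3/\langle \hat P\rangle$, and $\hat P$ is an eigenvector. We may multiply $M$ by a central element $\zeta_d^k$. The main obstacle is arranging $\lambda=1$, so that $M$ becomes a reflection. The eigenvalues of $M$ are powers of $\zeta_d$ up to sign. Because $M\hat P=\mu\hat P$, we have $\lambda\ne\mu$, since otherwise $M$ would be scalar. The eigenvalue $\lambda$ occurs with multiplicity two. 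Concretely, using the monomial form of $M$, a permutation part that is a $3$-cycle has eigenvalues $c\,\omega^j$ with distinct $\omega^j$, so it is excluded. Thus the permutation part is the identity or a transposition, and in both cases the repeated eigenvalue is a power of $\zeta_d$. Rescaling by the center, we may therefore take $\lambda=1$, and then $M$ is a reflection with $\hat P$ spanning the image of $M-E$.

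Finally, we apply Proposition~\ref{l-t} with $n+2=3$. Type (i) reflections $r_{e_i,\zeta}$ give $P=e_i$, that is, the three points $(1:0:0),(0:1:0),(0:0:1)$. The $d-1$ nontrivial powers together with the identity give $G[P]\cong\mathbb Z_d$. Since $\deg\pi_P=d$ for these outer points, each is Galois. This proves (1), including $d=3$.

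Type (ii) reflections $r_{e_i-\zeta e_j,-1}$ give the points $e_i-\zeta e_j$, for example $(1:-\zeta:0)$. There are $d\binom32=3d$ of them, and they are pairwise distinct as points. For such a point, $G[P]$ consists of the identity and this involution, since no other reflection has the same center. Hence $G[P]\cong\mathbb Z_2$, and for $d\ge4$ we have $2<d-1=\deg\pi_P$, so these points are not Galois. Because every $\tau\in G[P]$ comes from a reflection centered at $\hat P$, these $3+3d$ points exhaust all quasi-Galois points. This gives (2).
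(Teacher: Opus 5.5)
Your proposal is correct and is essentially the paper's argument: Theorem~\ref{projtransf} places $G[P]$ inside $G(d,1,3)/Z$, each nontrivial element is normalized to a reflection centered at $\hat P$, and Proposition~\ref{l-t} does the counting; you also supply the rescaling step and the fact that each center carries exactly one reflection type, which the paper leaves implicit. There are a few minor slips, none affecting the conclusions: eigenvalues of elements of $G(d,1,3)$ need not be $\pm$ powers of $\zeta_d$ (your concrete identity/transposition argument is what does the work); the type (ii) points $e_i-\zeta e_j$ lie on $F_d^1$ only for odd $d$, so $\deg\pi_P\in\{d-1,d\}$, both exceeding $2$; and for $d=3$ your restriction to outer points is exactly how (1) must be read, since every point of $F_3^1$ is a trivial inner Galois point with $G_P\cong\mathbb{Z}_2$.
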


\begin{remark}
	Applying Proposition \ref{l-t}, we can obtain concrete coordinates of Galois and quasi-Galois points for $F_d^n$. Indeed, suppose that $\{e_1, e_2, \ldots, e_{n + 2}\}$ is the standard basis. Then the quasi-Galois points are given by the points in ${\mathbb P}^{n+1}$ which correspond to $e_i - \zeta e_j = (0, \ldots, 1,  \cdots,  -\zeta,  \cdots, 0) \in {\mathbb C}^{n + 2}$.  
\end{remark}

\section*{Acknowledgments}
The authors are grateful to Professor Satoru Fukasawa for many valuable discussions and helpful suggestions during the development of this work. 
The first author was partially supported by JSPS KAKENHI Grant Number JP18K03230. 
The second author was partially supported by JSPS KAKENHI Grant Number JP25K06930.


\end{document}